\newtheorem{Thm}{Theorem}[section]
\newtheorem{Lem}[Thm]{Lemma}
\newtheorem{Prop}[Thm]{Proposition}
\newtheorem{Not}[Thm]{Notation}
\theoremstyle{definition}
\newtheorem{Def}[Thm]{Definition}
\theoremstyle{remark}
\newtheorem{Rem}{Remark}
\theoremstyle{example}
\newcommand{\Hom}{\mathrm{Hom}}
\newcommand{\Cogen}{\mathrm{Cogen\ }}
\newcommand{\Cone}{\mathrm{Cone}}
\newcommand{\Gen}{\mathrm{Gen}}
\newcommand{\Prod}{\mathrm{Prod\ }}
\newcommand{\Add}{\mathrm{Add\ }}
 \newcommand{\Ext}{\mathrm{Ext}}
 \newcommand{\End}{\mathrm{End}}
\newcommand{\Tor}{\mathrm{Tor}}
 \newcommand{\Ker}{\mathrm{Ker}}
\newcommand{\Rej}{\mathrm{Rej}}
\newcommand{\Mod}{\mbox{\rm Mod-}}
\newcommand{\LMod}{\mbox{\rm -Mod}}
\newcommand*{\cclass}[1]{\mathcal{#1}}
\newcommand*{\F}{\cclass{F}}
\newcommand*{\M}{\cclass{M}}
\newcommand*{\A}{\cclass{A}}
\newcommand*{\C}{\cclass{C}}
\newcommand*{\T}{\cclass{T}}
\newcommand*{\D}{\cclass{D}}
\newcommand*{\X}{\cclass{X}}
\newcommand*{\Y}{\cclass{Y}}
\newcommand*{\E}{\cclass{E}}
\newcommand*{\R}{\mathbb{R}}
\newcommand*{\N}{\mathbb{N}}
\newcommand*{\LL}{\mathbb{L}}
\newcommand*{\ES}{\cclass{S}}
\def\doublearrow#1#2{\mathrel{\mathop{\vcenter{\offinterlineskip
\hbox{$\longrightarrow$} \hbox{$\longleftarrow$}}}\limits_{#2}^{#1}}}
\def\to{\rightarrow}
\begin{document}
%Topmatter
\title[Equivalences induced by tilting modules]{Equivalences induced by  infinitely generated tilting modules}
\author{Silvana Bazzoni}
\address
{Dipartimento di Matematica Pura e Applicata, \\ Universit\`a di
Padova\\ Via Trieste 63, 35121 Padova, Italy}
\email{bazzoni@math.unipd.it}  
\thanks {Supported by MIUR, PRIN 2005, project ``Perspectives in the theory of rings, Hopf algebras and categories of modules'' and by Universit\`{a} di Padova (Progetto di Ateneo
CPDA071244/07 `` 
Algebras and cluster categories'' ).
 \protect\newline 2000 Mathematics
Subject Classification. Primary: 16D90; 16E30; 18E30; Secondary:  16S90; 16G10.
 \protect\newline Key words: tilting modules, equivalences and derived equivalences.}
\begin{abstract}  We generalize Brenner and Butler's Theorem as well as Happel's Theorem on the equivalences induced by a finitely generated tilting module over artin algebras, to the case of an infinitely generated tilting module over an arbitrary associative ring establishing the equivalences  induced between subcategories of module categories and also at the level of derived categories.
\end{abstract}
\maketitle

\section{Introduction} 

Tilting theory started in the context of finitely generated modules over artin algebras and was further generalized over arbitrary associative rings with unit and to infinitely generated modules (see  \cite{CF}, \cite{CT}, \cite{CDT}, \cite{ATT}).

One of the most important features in classical tilting theory is the famous Brenner and Butler's Theorem~\cite{BB} establishing two equivalences between suitable categories of finitely generated modules. 

A finitely generated tilting module $T$ over an artin algebra $\Lambda$ gives rise to a torsion pair  $(\T,\F)$, where  $\T$ is the class of modules generated by $T$. If $D$ denotes the standard duality and $\Gamma$ is the endomorphism ring of $T$, then $D(T)$ is a cotilting $\Gamma$-modules with an associated torsion pair $(\X,\Y)$ where $\Y$ is the class modules cogenerated by $D(T)$. The Brenner and Butler's Theorem states that the functor $\Hom_{\Lambda}(T,-)$ induces an equivalence between the categories $\T$ and $\Y$ with inverse the functor $-\otimes_{\Gamma}T$, and the functor $\Ext^1_{\Lambda}(T,-)$ induces an equivalence between $\F$ and $\X$ with inverse the functor $\Tor^{\Gamma}_1(-, T)$. (See \cite{HR} and \cite{HRS}).

Moreover, $\T$ is the kernel of the functor $\Ext^1_{\Lambda}(T,-)$, $\Y$ is the kernel of $ \Tor^{\Gamma}_1(-,T)$,  $\F$ is the kernel of $ \Hom_{\Lambda}(T, -)$ and $\X$ is the kernel of $-\otimes_{\Gamma}T$.

Later on, Happel~\cite{H} observed that the natural context in which to interpret the above equivalences is that of derived categories.
He proved that the total right derived functor of the functor $\Hom_{\Lambda}(T,-)$ induces a derived equivalence between the bounded derived categories of finitely generated $\Lambda$-modules and the bounded derived categories of finitely generated $\Gamma$-modules.

Colby and Fuller~\cite{CF}  proved  a ``Tilting Theorem''  for finitely presented tilting modules over an arbitrary associative ring, generalizing  Brenner and Butler's Theorem, and Colpi~\cite{C} extended the Tilting Theorem to the wider context of Grothedieck categories.

%In the present paper we consider the case of infinitely generated tilting modules and we prove a generalization of Brenner-Butler's Theorem as well as Happel's Theorem for derived categories.

The first instance of a generalization of Brenner and Butler's Theorem to infinitely generated tilting modules, appears in two papers by Facchini~\cite{F1}, ~\cite{F2} where he studied the equivalences induced by the tilting module $\partial$, a divisible module introduced by Fuchs ~\cite{F} over commutative domains.
The theorems proved by Facchini provide a link between the Brenner and Butler tilting equivalences and the equivalences established by Harrison and Matlis between subcategories of modules over a commutative domain $R$.

If $Q$ is the quotient filed of a commutative domain $R$ and $K$ is the module $Q/R$, then Harrison and Matlis' Theorem states that the functor $\Hom_R(K,-)$ induces an equivalence between the category of $h$-divisible torsion modules and the category of torsion free cotorsion modules. Moreover, the functor $\Ext^1(K, -)$ gives an equivalence between the category of $h$-reduced torsion $R$-modules and the category of special cotorsion modules. %
Thus the similarity with tilting equivalences was evident and the papers by Facchini showed the advantage to work with a tilting module, namely the module $\partial$ rather than the module $K$, even though the formal definition of an infinitely generated tilting module was not yet available.

In this paper we generalize both Brenner and Butler Theorem's and Facchini results, to the case of an arbitrary (infinitely generated) tilting module over an associative ring $R$. If $\Mod R$ is the category of all right $R$-modules  and $T\in \Mod R$ is a tilting module, $T$ induces a torsion pair $(\T, \F)$ in $\Mod R$, where $\T$ is the class of modules genrated by $T$. If $S$ is the endomorphism ring of $T$, we prove that the dual $T^d$ of $T$ with respect to an injective cogenerator of $\Mod R$, is a partial cotilting right $S$-module inducing a torsion pair $(\T_{T^d}, \F_{T^d})$ in $\Mod S$.

By Theorem~\ref{T:equivalences}, we prove that the functor $\Hom_R(T,-)$ induces an equivalence between the category $\T$ and the intersection of $\F_{T^d}$ with a suitable subcategory $\M$ of $\Mod S$, namely the double perpendicular category of the module $T^d$ (see definition in Section~\ref{subclasses}).
Secondly, the functor $\Ext^1_R(T,-)$  induces an equivalence between $\F$ and the intersection of $\T_{T^d}$ with the subcategory $\M$.
Moreover, the inverses of these equivalences are given by the functors $-\otimes_ST$ and $\Tor^S_1(-, T)$.

The subcategories of $\Mod S$ equivalent to $\T$ and $\F$ in the above equivalences cannot be interpreted as Gabriel quotients of $\Mod S$, since there are no Serre subcategories arising in the process.
Thus again, as in the case of  finitely generated tilting modules, the situation can be better illustrated in the context of derived categories, where the equivalences involved can be formulated in a concise and more expressive way. In fact, if $\D(R)$ and $\D(S)$ are the (unbounded) derived categories of the categories $\Mod R$ and $\Mod S$ respectively, we prove that the total right derived functor of the functor $\Hom_R(T,-)$, that is the functor $\R \Hom_R(T, -)$, induces an equivalence between $\D(R)$ and the quotient category of $\D(S)$ modulo the full triangulated subcategory $\Ker(-\overset{\LL}\otimes _ST)$, namely the kernel of the total left derived functor of the functor $-\otimes_ST$.

{\bf Acknowledgement} I wish to thank Bernhard Keller for his help in proving a crucial step in the proof of Lemma~\ref{L:counit} and Pedro Nicol\'as for suggesting the use of the powerful Proposition 1.3 in Gabriel Zisman's book
~\cite{GZ}.
\section{Preliminaries}
In what follows all rings are associative with unit. We recall some definitions and results.

For a ring $R$, $\Mod R$ ($R \LMod$)  will denote the category of all right (left)
$R$-modules.
% and by $\rmod R$, 
%the category of all right compact modules, that is the modules possessing a projective resolution consisting of
% finitely generated projective modules. (If $R$ is a right
%coherent ring then $\rmod R$ is just the category of all
%finitely presented modules.)

%For $n \in \omega$, 
For an $R$-module $M$ we denote by %$\PP_1$  the class of all modules of projective dimension $\leq
%1 $, and by $\I_1$ the class of all modules of injective dimension $\leq
%1 $. Moreover, 
p.d. $M$ and i.d. $M$  the projective and injective dimension of $M$, respectively.

If $\lambda$ is a cardinal, $M^{(\lambda)}$ and $M^{\lambda}$ will denote the direct sum and the direct product of $\lambda$ copies of $M$, respectively.

Let $\mathcal{C} \subseteq\Mod R$. Define 

\[\C ^{\perp}=\{X\in \Mod R \mid \Ext_R^i(C, X)=0 \ {\rm for \ all}\ C\in \C,  \ {\rm for \ all\ } i\geq1 \},\]
\[ ^{\perp}\C=\{X\in \Mod R \mid \Ext_R^i(X, C)=0 \ {\rm for \ all}\ C\in \C,  \ {\rm for \ all\ } i\geq1 \}.\]

\begin{Def} {\rm (\cite{CT}, \cite{ATT})} An $R$-module $T$ is \emph{$1$-tilting} provided \newline
{\rm (T1)} {\rm p.d.}$T\leq 1$, \newline
{\rm (T2)} $\Ext^i_R(T, T^{(\lambda)}) =0$ for each $i \geq 1$ and every cardinal $\lambda$, and
\newline
{\rm (T3)} there exists an exact sequence

$$0 \rightarrow R \rightarrow T_0 \rightarrow T_1 \rightarrow 0$$

such that $T_i \in\Add T$ for each $0 \leq i \leq 1$.

Here, $\Add T$ denotes the class of all direct summands of arbitrary direct
sums of copies of $T$.

If $T$ is an $1$-tilting module, $T^{\perp }$ is called $1$\emph{-tilting} class.\end{Def}

%\begin{Rem} {\emph 
%If $T$ is an $1$-tilting module, then  $T^\perp$ is a torsion class and $^\perp(T^\perp)\subseteq \PP_1$.
%}\end{Rem}
%
\begin{Def} {\rm (\cite{CT})} 
An $R$-module  $T$ is \emph{$1$-partial tilting} if $T$ satisfies {\rm (T1)}, {\rm (T2)} and $T^\perp$ is closed under direct sums.
\end{Def}

We have also dual definitions.
\begin{Def} {\rm (\cite{CDT}, \cite{ATT})} A module $C$ is \emph{$1$-cotilting} provided \newline
{\rm (C1)} {\rm i.d.}$C \leq1$, \newline
{\rm (C2)} $\Ext^i_R(C^{\lambda}, C) =0$ for each $i \geq 1$ and every cardinal $\lambda$, and
\newline
{\rm (C3)} there exists an exact sequence

$$0 \rightarrow C_1 \rightarrow C_0 \rightarrow W \rightarrow 0$$

such that $C_i \in \Prod C$ for each $0 \leq i \leq 1$ and $W$ is an injective $R$-cogenerator.

Here, $\Prod C$ denotes the class of all direct summands of arbitrary direct
products of copies of $C$.

If $C$ is an $1$-cotilting module, $^{\perp }C$ is called $1$\emph{-cotilting} class.
\end{Def}  

%\begin{Rem} {\emph 
%If $C$ is an $1$-cotilting module, then  $^\perp C$ is a torsion free class and $(^\perp C)^\perp\subseteq \I_1$.
%}\end{Rem}

\begin{Def}{\rm (\cite{CDT})}
An $R$-module  $C$ is \emph{$1$-partial cotilting} if $C$ satisfies {\rm (C1)}, {\rm (C2)} and $^\perp C$ is closed under direct products.
\end{Def}

If $T$ and $U$ are $1$-tilting ($1$-cotilting) modules, then $T$ is
\emph{equivalent} to $U$ if $T^{\perp }=U^{\perp }$ ($^{\perp }T={}^{\perp}U$), which is the case if
and only if $\Add T=\Add U$ ($\Prod T=\Prod U$).

%\begin{Def} A class $\C$ of modules is said to be of \emph{finite type},
%provided there is a set $\mathcal{S}\subseteq \PP_n$ of modules with projective resolution consisting of finitely generated projective modules, such that $%
%\C= \mathcal{S} ^\perp$.
%\end{Def}

We recall some results on infinitely generated $1$-tilting and $1$-cotilting modules which give a better understanding of their properties.
\begin{itemize}

\item By \cite[1.3]{CT} a module $T$ is $1$-tilting if and only if
 $T^{\perp }=\Gen T$, where $\Gen T$ is the class of modules generated by $T$.

\item  By ~\cite{BH} If $T$ is a $1$-tilting module, then the tilting class $T^\perp$ is of finite type, that is there is a set $\ES$ of finitely presented modules of projective dimension at most $1$, such that $\ES^\perp= T^\perp$.

\item By  ~\cite{B} $1$-cotilting modules are pure injective.

\item As a consequence of the above results, we have that every $1$-tilting right $R$-module $T$ induces a torsion pair $(\T, \F)$ in $\Mod R$ where $\T=\Gen T=T^\perp$ and $\F= \Ker (\Hom_R(T,-)).$\\
Every $1$-cotilting right $R$-module $C$ induces a torsion pair $(\T, \F)$ in $\Mod R$ where $\F=\Cogen C={}^\perp C$ and $\T=\Ker (\Hom_R(-, C))$. Moreover, the cotilting torsion free class $\F$ is closed under epimorphic images.

\end{itemize}
\section{Infinitely generated $1$-tilting modules.}\label{phi-and-e}
In this section we adapt the results proved by Facchini in \cite{F1} and \cite{F2} for the case of the tilting module $\partial$ defined over a commutative domain, to the case of a tilting module over  an arbitrary associative ring.

First of all we have to make a suitable choice of a representative in the equivalence class of a $1$-tilting module.
\begin{Prop}\label{P:simple-sequence} Let $R$ be a ring and let
$T_R$ be a $1$-tilting module. Up to equivalence we can assume that $T$ fits in an exact sequence of the form:
\[0\to R\to T\to T_1\to 0\]
where $ T_1$ is a direct summand of $T$. \end{Prop}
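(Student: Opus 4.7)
The plan is to exploit an Eilenberg-swindle-style absorption to replace $T$ by an equivalent tilting module $T'$ so that the given (T3)-sequence can be enlarged to a sequence whose middle term and cokernel both collapse to $T'$ itself. The raw input is the sequence $0\to R\to T_0\to T_1\to 0$ furnished by axiom (T3), with $T_0,T_1\in \Add T$; the task is to deform it so that the middle term becomes the new tilting module and the cokernel becomes a direct summand of that module.

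The natural candidate is
\[T':=T\oplus T_0^{(\aleph_0)}\oplus T_1^{(\aleph_0)}.\]
First I would verify that $T'$ is a $1$-tilting module equivalent to $T$, i.e.\ that $\Add T'=\Add T$. The inclusion $\Add T'\subseteq \Add T$ is immediate because each of $T$, $T_0$, $T_1$ lies in $\Add T$, and the reverse inclusion follows because $T$ is a direct summand of $T'$. Granted $\Add T'=\Add T$, axioms (T1) and (T2) for $T'$ follow from the corresponding axioms for $T$, since both projective dimension $\leq 1$ and the relevant Ext-vanishing are preserved under direct sums and direct summands.

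To produce the desired short exact sequence I would take the direct sum of the original (T3)-sequence with the identity sequence on $T\oplus T_0^{(\aleph_0)}\oplus T_1^{(\aleph_0)}$, obtaining
\[0\to R\to T_0\oplus T\oplus T_0^{(\aleph_0)}\oplus T_1^{(\aleph_0)}\to T_1\oplus T\oplus T_0^{(\aleph_0)}\oplus T_1^{(\aleph_0)}\to 0.\]
The absorption identities $T_0\oplus T_0^{(\aleph_0)}\cong T_0^{(\aleph_0)}$ and $T_1\oplus T_1^{(\aleph_0)}\cong T_1^{(\aleph_0)}$ rewrite both the middle and the right-hand term as $T'$, yielding
\[0\to R\to T'\to T'\to 0,\]
so that the right-hand term is tautologically a direct summand of $T'$, as required.

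I do not anticipate any real obstacle: the entire argument is essentially an Eilenberg swindle plus some book-keeping. The only point demanding any care is the verification that $T'$ is genuinely a $1$-tilting module equivalent to $T$, and this reduces to the observation that $\Add T'=\Add T$.
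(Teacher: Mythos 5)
Your proof is correct and uses essentially the same idea as the paper: an Eilenberg swindle adding countably many copies to absorb the cokernel of the (T3)-sequence into a direct summand (the paper takes $T'=T_0\oplus T_1^{(\omega)}$, you take $T'=T\oplus T_0^{(\aleph_0)}\oplus T_1^{(\aleph_0)}$). Your variant is marginally cleaner in two respects: the cokernel becomes all of $T'$ rather than merely a summand, and including $T$ as a summand of $T'$ lets you verify equivalence directly via $\Add T'=\Add T$ instead of the paper's argument through $T'^{\perp}=\Gen T'\subseteq\Gen T=T^{\perp}$.
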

\begin{proof} From condition {\rm (T3}) in the definition of tilting modules, we have an exact sequence
$$0 \to R\overset{\iota} \to T_0 \to T_ 1\to 0$$
where $T_0, T_1 \in\Add T$. Consider the module $T'=T_0\oplus (T_1)^{(\omega)}$ and let $j\colon T_0\to T'$ be the natural embedding of $T_0$ in $T'$. Then we have an exact sequence:
$$ 0 \to R\overset{j \circ \iota} \to T'\to T_1\oplus(T_1)^{(\omega)}\to 0$$
where $T_1\oplus(T_1)^{(\omega)}\cong (T_1)^{(\omega)}$ is isomorphic to a direct summand of $T'$. 
Thus we also have an exact sequence
$$ 0 \to R \to T'\to T'_1\to 0$$
with $T'_1$ a direct summand of $T'$.
Now $T'$ is a $1$-tilting module. In fact, $T'$ satisfies conditions {\rm (T1)} and {\rm (T2)} since $T'\in \Add T$; it satisfies also {\rm (T3)}, by the above sequence. Moreover, $T'$ and $T$ are equivalent, since $T^\perp\subset T'^\perp$ and $T'^\perp =\Gen T'\subseteq \Gen T=T^\perp.$
\end{proof}

\begin{Not}\label{N:notation}
\emph{From now on we assume that $T$ is a $1$-tilting right $R$-module such that the short exact sequence of condition {\rm (T3)} has the form
\[\rm{(a)} \qquad 0\to R \overset{\mu}\to T\to  T_1\to 0\]
where $T_1$ is a direct summand of $T$.
Moreover, we denote by $S$ the endomorphism ring of $T$.}

\emph{As in \cite{F1} we fix the following notations:
\begin{enumerate}
\item $\mu(1_R)=w\in T$.
\item %If $\iota$ is the embedding of $T_1$ into $T$ we let $\phi=\sigma\circ\iota$. Thus 
$\phi$ is an endomorphism of $T$ such that $\Ker\phi=wR$ and $\phi (T)$ is a direct summand of $T$.
\item $e$ is a fixed idempotent endomorphism of $T$ such that $e(T)=\phi (T).$
\end{enumerate}}
\end{Not}

\begin{Lem}\label{L:S-sequence} Let $T$ be as in Notation~\ref{N:notation}. There is a short exact sequence of left $S$-modules
\[\rm{(b)}\qquad 0\to I \to S\to  T\to 0\] such that
\begin{enumerate}
\item [(1)] $I$ is the left ideal $\{f\in S\mid f(w)=0\}$ and also $I=S\phi$;
\item [(2)] $I$ is isomorphic to $Se$;
\item[(3)] $\End_S(T)\cong R$;
\item [(4)]  $_ST$ is a cyclically presented partial $1$-tilting $S$-module.
\end{enumerate}
\end{Lem}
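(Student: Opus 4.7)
Plan of proof. The plan is to first establish the exact sequence (b) by analysing the evaluation map $\varepsilon: S \to T$, $f \mapsto f(w)$; statements (1)--(4) will then follow from it together with the choices in Notation~\ref{N:notation}. I would show $\varepsilon$ is surjective by applying $\Hom_R(-,T)$ to sequence (a): since $T_1 \in \Add T$ and (T2) gives $\Ext^1_R(T, T^{(\lambda)}) = 0$, we deduce $\Ext^1_R(T_1, T) = 0$ because $T_1$ is a direct summand of some $T^{(\lambda)}$; hence $\Hom_R(T,T) \to \Hom_R(R,T) \cong T$ is surjective, and under the identification $\alpha \mapsto \alpha(\mu(1))$ this map is precisely $\varepsilon$, so $Sw = T$. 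Its kernel is visibly $\{f \in S : f(w) = 0\}$. To identify this kernel with $S\phi$, the inclusion $S\phi \subseteq \Ker\varepsilon$ is immediate from $\phi(w) = 0$; for the reverse, any $g \in \Ker\varepsilon$ kills $wR$ and factors as $g = \bar g \circ \pi$ through $\pi : T \to T/wR \cong T_1$. The same is true of $\phi = \bar\phi \circ \pi$, and $\bar\phi : T_1 \to \phi(T)$ is an isomorphism because $\Ker\phi = wR = \Ker\pi$. Using a splitting $T = \phi(T) \oplus T''$ with projection $p: T \to \phi(T)$, I would then write $g = (\bar g \circ \bar\phi^{-1} \circ p) \circ \phi \in S\phi$. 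This factorization argument is the one delicate step in the proof.

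For (2), the map $\alpha: Se \to S\phi$ sending $se \mapsto s\phi$ is well-defined and surjective because $e\phi = \phi$ (since $\phi(T) = e(T)$ and $e$ is the identity on its image), and it is injective because $s\phi = 0$ forces $s$ to vanish on $\phi(T) = e(T)$, whence $se = 0$. For (3), the commuting right $R$-action on $T$ yields a ring map $\rho : R \to \End_S(T)$, $r \mapsto (t \mapsto tr)$; injectivity follows from $\rho(r)(w) = \mu(r)$ together with the injectivity of $\mu$, and for surjectivity, given $\gamma \in \End_S(T)$ one computes $\phi(\gamma(w)) = \gamma(\phi(w)) = 0$, so $\gamma(w) = wr$ for a unique $r \in R$. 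Since $Sw = T$ by (1), left $S$-linearity of $\gamma$ then gives $\gamma(s(w)) = s(\gamma(w)) = s(w)r$ throughout $T$, i.e.\ $\gamma = \rho(r)$.

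For (4), combining (b) with (2) produces a length-one projective resolution $0 \to Se \to S \to T \to 0$ of $_ST$ by finitely generated projectives, so $\mathrm{p.d.}(_ST) \leq 1$ and $T \cong S/S\phi$ is cyclically presented. Applying $\Hom_S(-, M)$ and using the isomorphism $\Hom_S(S\phi, M) \cong \Hom_S(Se, M) \cong eM$ induced by $\alpha$, the restriction map translates into $m \mapsto \phi m$, yielding $\Ext^1_S(T, M) \cong eM/\phi M$. For $M = T^{(\lambda)}$ this vanishes because $eT^{(\lambda)} = (eT)^{(\lambda)} = \phi(T)^{(\lambda)} = \phi T^{(\lambda)}$, establishing the Ext-vanishing half of (T2). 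Closure of $T^\perp$ under direct sums is then automatic: a finitely generated projective resolution makes $\Ext^1_S(T, -)$ commute with arbitrary direct sums, and higher $\Ext$'s vanish by the projective-dimension bound.
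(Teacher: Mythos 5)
Your proof is correct and follows the same route as the paper: the sequence (b) and statement (1) come from applying $\Hom_R(-,T)$ to the sequence (a) and using $\Ext^1_R(T_1,T)=0$, while (2)--(4) are extracted from the identifications $I=S\phi\cong Se$ exactly as in the paper (which delegates these points to \cite[Lemma 2.15]{CT}). Your write-up simply makes explicit the details --- the factorization $g=h\phi$, the computation $\Ext^1_S(T,M)\cong eM/\phi M$, and the finitely generated projective resolution giving closure of $T^\perp$ under direct sums --- that the paper leaves to the cited reference.
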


\begin{proof} (1) and (2) follow by applying the functor $\Hom_R(-, T)$ to the exact sequence (a).

So $_ST$ is cyclically presented; (3) and (4) follow by \cite[Lemma 2.15]{CT}.
\end{proof} 

\begin{Prop}[\cite{CT}, ~\cite{F2}]\label{P:Hom-Tor} Let $T$ be a $1$-tilting right $R$-module as in Notation~\ref{N:notation}. 
The following hold:
\begin{enumerate}
\item[(1)] The natural homomorphism (the counit of the adjunction) \[\phi\colon \Hom_R(T, M)\otimes_ST\to M\] is an isomorphism  if and only if $M$ in the tilting class $T^\perp.$
\item[(2)] $\Tor_1^S(\Hom_R(T, M), T)=0$, for every right $R$-module $M$.
\end{enumerate}
\end{Prop}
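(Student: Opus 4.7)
The plan is to exploit the short exact sequence~(b) of Lemma~\ref{L:S-sequence}: since $I\cong Se$ with $e$ idempotent, $I$ is a direct summand of $_SS$, and so (b) is a projective resolution of $_ST$ of length one. Setting $N:=\Hom_R(T,M)$, I apply $N\otimes_S -$ to (b) to obtain
\[0\to \Tor_1^S(N,T)\to N\otimes_S I \to N\otimes_S S \to N\otimes_S T \to 0.\]
The middle arrow is (up to the isomorphism $I\cong Se$) the split inclusion $Ne\hookrightarrow N$. Hence $\Tor_1^S(N,T)=0$, which is statement~(2), and $N\otimes_S T\cong N/NI$. Under the isomorphism $T\cong S/I$ of left $S$-modules given by $s+I\mapsto s(w)$, the counit of the adjunction transports to the well-defined map $\bar\psi\colon N/NI\to M$, $h+NI\mapsto h(w)$.

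For~(1), I apply $\Hom_R(-,M)$ to sequence~(a) of Notation~\ref{N:notation}. Since $R$ is projective and $\mu(1_R)=w$, the resulting long exact sequence reads
\[0\to \Hom_R(T_1,M)\to N\xrightarrow{\psi} M\to \Ext^1_R(T_1,M)\to \Ext^1_R(T,M)\to 0,\]
with $\psi(h)=h(w)$; thus $\bar\psi$ is simply induced by $\psi$ on the quotient $N/NI$. The crucial identification is $NI=\Hom_R(T_1,M)$ as subgroups of $N$. For $NI\subseteq\Hom_R(T_1,M)$: using $I=S\phi$ one has $NI=N\phi$, and each $f\circ\phi$ factors through the projection $\pi\colon T\twoheadrightarrow T/wR=T_1$ because $\Ker\phi=wR$. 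For the reverse inclusion, given $g\colon T_1\to M$, factor $\phi=\iota\bar\phi\pi$ where $\bar\phi\colon T_1\xrightarrow{\sim}\phi(T)$ and $\iota\colon \phi(T)\hookrightarrow T$; using the hypothesis that $\phi(T)$ is a direct summand of $T$, extend $g\circ\bar\phi^{-1}\colon \phi(T)\to M$ by zero on a complement to some $f\in N$, and observe $f\phi=g\circ\pi\in NI$.

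To conclude, if $M\in T^\perp=\Gen T$, then $\Ext^1_R(T,M)=0$ and also $\Ext^1_R(T_1,M)=0$ because $T_1\in\Add T$, so $\psi$ is surjective; combined with $\Ker\psi=NI$ this forces $\bar\psi$ to be an isomorphism. Conversely, if $\bar\psi$ is an isomorphism then $\psi$ is surjective, so every $m\in M$ lies in the image of some $g\colon T\to M$, whence $M\in\Gen T=T^\perp$. The main obstacle is the identification $NI=\Hom_R(T_1,M)$, and specifically the nontrivial inclusion, which is exactly what motivates the careful choice in Notation~\ref{N:notation} of $\phi$ with $\phi(T)$ a direct summand of $T$.
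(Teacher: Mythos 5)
Your argument for part (2) breaks down at the assertion that the map $N\otimes_S I\to N\otimes_S S=N$ is ``the split inclusion $Ne\hookrightarrow N$''. What is true is that $I$ is \emph{abstractly} isomorphic to the direct summand $Se$ of $S$; the inclusion $I\hookrightarrow S$ itself is not split in general (if it were, $_ST\cong S/I$ would be a projective left $S$-module and $\Tor_1^S(-,T)$ would vanish identically, which fails already for Fuchs' module $\partial$ and would make the functor $\Tor_1^S(-,T)$ in Theorem~\ref{T:equivalences}(2) trivial). Tracing through the isomorphism $Se\to I=S\phi$, $se\mapsto s\phi$, the induced map $Ne\cong N\otimes_SI\to N$ is $xe\mapsto x\phi$, whose kernel is $\{x\in N\mid x\phi=0\}e$; this is nonzero for a general right $S$-module $N$. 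The entire content of part (2) is that this kernel vanishes for the particular modules $N=\Hom_R(T,M)$: if $g\in\Hom_R(T,M)$ satisfies $g\phi=0$, then $\Ker g\supseteq\phi(T)=e(T)$, hence $ge=0$. This is exactly the step the paper supplies (following Facchini) and your proposal omits.

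Part (1), by contrast, is correct and takes a genuinely different route from the paper, which simply cites \cite[Corollary 2.18]{CT}. Your identification of the counit with $\bar\psi\colon N/NI\to M$, $h+NI\mapsto h(w)$, together with the equality $NI=N\phi=\Ker\psi=\Hom_R(T_1,M)$ (where the nontrivial inclusion correctly exploits that $\phi(T)$ is a direct summand of $T$), reduces the statement to the surjectivity of $\psi$, which you relate to $M\in\Gen T=T^\perp$ via the long exact sequence obtained from (a). This is a clean, self-contained argument. Note also that the isomorphism $N\otimes_ST\cong N/NI$ used there needs only right exactness of the tensor product, so it does not depend on the flawed step in (2); but (2) itself must be repaired along the lines above.
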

\begin{proof} (1) is proved in \cite[Corollary 2.18]{CT}.

(2) The proof is the same as in \cite[Proposition 4.2]{F1}, but we repeat the argument because our context is different. Let $N$ be a right $S$-module; applying the functor $N\otimes_S-$ to the exact sequence (b), we get that $\Tor_1^S(N, T)$ is the kernel of the map $N\otimes_SI\to N$. Since $I=S\phi\cong Se$ we have that $\Tor_1^S(N, T)$ is isomorphic to the kernel of the abelian group morphism $Ne\to N$ defined by $xe\mapsto x\phi$, for every $x\in N$, hence $\Tor_1^S(N, T)$ is isomorphic to $\{x\in N\mid x\phi=0\}e$.

So we need to show that, for every right $R$-module $M$, if $Y=\{g\in \Hom_R(T, M)\mid g\phi=0\}$, then $Ye=0$. Now $g\phi=0$ if and only if $\Ker g\supseteq \phi(T)=eT$ if and only if $ge=0$.
\end{proof}

\section{Equivalences between subclasses of modules}\label{subclasses}

For every right $R$-module $M$ we denote by $M^d$ the dual of $M$ with respect to an injective cogenerator $W$ od $\Mod R$, that is $M^d=\Hom_R(M, W)$. 
\begin{Prop}\label{P:partial-cotilting} Let the assumption be as in Notation~\ref{N:notation}. The right $S$-module $T^d$ satisfies the following properties:
\begin{enumerate}

\item[(1)] $[\Tor_i^S(-, T)]^d\cong \Ext^i_S(-, T^d)$. In particular, i.d.$(T^d)_S\leq 1$.
\item[(2)] $\Tor_1^S(T^d, T)\cong[\Ext^1_S(T, T)]^d=0$.
\item[(3)] $T^d$ is a partial $1$-cotilting right $S$-module.
\end{enumerate}
\end{Prop}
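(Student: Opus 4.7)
For part (1), the plan is to begin from the tensor--Hom adjunction
\[
\Hom_R(N\otimes_S T,\,W)\;\cong\;\Hom_S(N,\,T^d),
\]
natural in the right $S$-module $N$, where the $(S,R)$-bimodule structure of $T$ underlies both sides. Picking a projective resolution $P_\bullet\to N$ and invoking exactness of $(-)^d=\Hom_R(-,W)$ (since $W$ is injective), the adjunction propagates to derived functors and yields $[\Tor_i^S(N,T)]^d\cong\Ext^i_S(N,T^d)$. By Lemma~\ref{L:S-sequence} the left $S$-module $T$ admits the length-one projective resolution $0\to Se\to S\to T\to 0$, so $\Tor_i^S(-,T)=0$ for $i\ge 2$; dualizing, $\Ext^i_S(-,T^d)=0$ for $i\ge 2$, whence i.d.~$(T^d)_S\le 1$.

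For part (2), I would apply $\Hom_S(-,T)$ and $T^d\otimes_S -$ to that same resolution. Using $\Hom_S(S,T)=T$, $\Hom_S(Se,T)\cong eT$, and $\End_S(T)\cong R$ from Lemma~\ref{L:S-sequence}, the Hom complex, dualized by the exact functor $(-)^d$, becomes
\[
0\to [\Ext^1_S(T,T)]^d\to (eT)^d\to T^d\to R^d\to 0,
\]
while the tensor complex yields
\[
0\to \Tor_1^S(T^d,T)\to T^d e\to T^d\to T^d\otimes_S T\to 0.
\]
The idempotent $e$ splits $T=eT\oplus(1-e)T$ as a right $R$-module, identifying $(eT)^d\cong T^d e$; under this identification both middle arrows become the canonical inclusion $T^d e\hookrightarrow T^d$. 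Since $R^d=W$ and $T^d\otimes_S T\cong W$ by Proposition~\ref{P:Hom-Tor}(1) (the injective cogenerator $W$ lies in $T^\perp$), the two four-term sequences share their last three terms, forcing $\Tor_1^S(T^d,T)\cong[\Ext^1_S(T,T)]^d$. The latter vanishes because $_ST$ is partial $1$-tilting by Lemma~\ref{L:S-sequence}(4).

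For part (3), condition (C1) is the ``in particular'' of (1). For (C2), the isomorphism of (1) gives $\Ext^i_S((T^d)^\lambda,T^d)\cong[\Tor_i^S((T^d)^\lambda,T)]^d$, which vanishes for $i\ge 2$ by projective dimension; for $i=1$ I would observe that, in the resolution $0\to Se\to S\to T\to 0$, tensoring with $Se$ is right multiplication by the idempotent $e$ and hence commutes with arbitrary products, so $\Tor_1^S((T^d)^\lambda,T)\cong[\Tor_1^S(T^d,T)]^\lambda=0$ by (2). The same commutation gives $\Tor_1^S(\prod_i N_i,T)\cong\prod_i\Tor_1^S(N_i,T)$; since $W$ is a cogenerator, (1) identifies $^\perp T^d=\Ker\Tor_1^S(-,T)$, and closure under products follows.

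The principal obstacle is part (2)---producing the canonical isomorphism $\Tor_1^S(T^d,T)\cong[\Ext^1_S(T,T)]^d$ as a genuine natural identification rather than a coincidence of two vanishing objects. The key idea is to feed the \emph{same} length-one projective resolution of $_ST$ into both functors and match the resulting four-term exact sequences through the splitting of $T$ produced by the idempotent $e$.
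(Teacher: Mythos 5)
Your proof is correct and follows essentially the same route as the paper's: part (1) is the standard Ext--Tor duality obtained from a projective resolution of $N$ and exactness of $(-)^d$, part (2) is the dual formula for the finitely presented left $S$-module $T$ of projective dimension at most $1$ (which the paper simply cites as a well-known relation), and part (3) uses the commutation of $\Tor_1^S(-,T)$ with direct products exactly as in the paper. One small inaccuracy in (2): under the identification $(eT)^d\cong T^de$ the two middle arrows are not the canonical inclusion $T^de\hookrightarrow T^d$ but the map $he\mapsto h\phi$ induced by $\delta$; since they are nevertheless the \emph{same} map on both sides, your comparison of the two four-term sequences, and hence the conclusion, is unaffected.
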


\begin{proof}  % (1) Is proved in \cite[Proposition 2.3]{AHT}
(1) Follows by a well known Ext-Tor relation and i.d.$(T^d)_S\leq 1$, since p.d.$_ST\leq 1$, so $\Tor_2^S(-, T)=0$.

(2)  $_ST$ is a finitely presented left $S$-module and p.d.$_ST\leq 1$, hence $\Tor_1^S(T^d, T)\cong[\Ext^1_S(T, T)]^d$ and $\Ext^1_S(T, T)=0$, since $_ST$ is a partial $1$-tilting module. 

(3) By (1) i.d.$(T^d)_S\leq 1$. Let $\{N_i\}_i$ be a family of right $S$-modules such that $\Ext_S^1(N_i, T^d)=0$, for every $i$. By (1) we have $\Tor_1^S(N_i, T)=0$. We show that $\Ext^1_S(\prod_iN_i, T^d)=0$. By (1) again, we have $\Ext^1_S(\prod_iN_i, T^d)\cong [\Tor_1^S(\prod_iN_i, T)]^d$. Since $_ST$ is finitely presented and p.d.$_ST\leq 1$, $\Tor_1^S(-, T)$ commutes with direct products, hence $\Ext^1_S(\prod_iN_i, T^d)\cong [\prod_i(\Tor_1^S(N_i, T)]^d$. But, as noted above,  $\Tor_1^S(N_i, T)=0$. Thus,  $^\perp T^d$ is closed under direct products. To conclude that $T^d$ is a partial $1$-cotilting module, it is enough to check that $\Ext^1_S(T^d, T^d)=0$. Now, $\Ext^1_S(T^d, T^d)\cong [\Tor_1^S(T^d, T)]^d$ and $\Tor_1^S(T^d, T)=0$ by (2). 
 \end{proof}

Recall that if $M$ is an $R$-module over a ring $R$, the preradical $\Rej_M$ is the subfunctor of the identity functor defined by $\Rej_M(X)= \cap \Ker\{f\mid f\in \Hom_R(X,M)\}$, for every $R$-module $X$. $\Rej_M$ is always a radical and if it is also idempotent, then it is a torsion radical (see from \cite{Bo}). In this case the associated torsion class consists of the modules $X$ such that $\Hom_R(X,M)=0$ and the torsion free class is $\Cogen M$.

\begin{Prop}\label{P:torsionfree-classes} In the same notations as Proposition~\ref{P:partial-cotilting} the partial $1$-cotilting $S$-module $T^d$ satisfies the following conditions:
\begin{enumerate}
\item [(1)] $T^d_S$ is a direct summand of a $1$-cotilting right $S$-module $C$ such that $^\perp C={}^\perp T^d$.
%\item[(1)] There is an exact sequence $0\to (T^d)^{\lambda}\to D \to E\to 0$ for a cardinal $\lambda$ and an injective $S$-cogenerator $E$ such that $T^d\oplus D=C$ is a $1$-cotilting right $S$-module with $^\perp C=^\perp T^d$.
\item[(2)] The preradical $\Rej_{T^d}$ is an idempotent radical inducing a torsion pair $(\T_{T^d}, \F_{T^d})$ in $\Mod S$ where $ \F_{T^d}= \Cogen T^d\subseteq {}^\perp{}T^d$.
\end{enumerate}
\end{Prop}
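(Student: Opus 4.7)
The plan is to treat (1) and (2) essentially independently: (1) is a dual Bongartz completion, while (2) reduces, once a single Ext-vanishing is in hand, to showing that $\Cogen T^d$ is closed under extensions. I expect (1) to be the main technical step; (2) is then mostly routine.

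For (1), I would fix an injective cogenerator $W$ of $\Mod S$ and set $\Lambda=\Ext^1_S(W,T^d)$ (as a set). Under the canonical isomorphism $\Ext^1_S(W,(T^d)^{\Lambda})\cong\Ext^1_S(W,T^d)^{\Lambda}$, the diagonal element corresponds to a short exact sequence
\[0\to(T^d)^{\Lambda}\to E\to W\to 0.\]
Applying $\Hom_S(-,T^d)$, the term $\Ext^1_S((T^d)^{\Lambda},T^d)$ vanishes by closure of ${}^\perp T^d$ under products (Proposition~\ref{P:partial-cotilting}), and pushing the sequence out along the projection $\pi_\mu\colon(T^d)^{\Lambda}\to T^d$ recovers the element $\mu\in\Lambda$, so the connecting map $\Hom_S((T^d)^{\Lambda},T^d)\to\Ext^1_S(W,T^d)$ is surjective; hence $\Ext^1_S(E,T^d)=0$, and since i.d.$\,T^d\le 1$, this upgrades to $E\in{}^\perp T^d$. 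Setting $C=T^d\oplus E$, I would verify (C1)--(C3): (C1) holds because the class of modules of i.d.$\,\le 1$ is closed under extensions; for (C2), $\Ext^1_S(C^{\lambda},C)$ decomposes into four summands $\Ext^1_S(X^{\lambda},Y)$ with $X,Y\in\{T^d,E\}$, and each vanishes---those into $T^d$ directly from closure of ${}^\perp T^d$ under products, those into $E$ by reducing along the displayed sequence (using $\Ext^i(-,W)=0$) to Ext into $(T^d)^{\Lambda}$; for (C3), the displayed sequence itself suffices since $(T^d)^{\Lambda}$ and $E$ both lie in $\Prod C$. The inclusion ${}^\perp C\subseteq{}^\perp T^d$ is automatic; the converse follows by applying $\Hom_S(X,-)$ to the sequence for $X\in{}^\perp T^d$.

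For (2), I would first observe that $\Cogen T^d\subseteq{}^\perp T^d$: an embedding $X\hookrightarrow(T^d)^I$ with cokernel $Y$ yields
\[\Ext^1_S((T^d)^I,T^d)\to\Ext^1_S(X,T^d)\to\Ext^2_S(Y,T^d),\]
whose left term vanishes by partial cotilting and whose right term vanishes since i.d.$\,T^d\le 1$. For closure of $\Cogen T^d$ under extensions, given $0\to A\to B\overset{\pi}\to C\to 0$ with $A,C\in\Cogen T^d$, I would use $\Ext^1_S(C,T^d)=0$ to lift an embedding $\iota_A\colon A\hookrightarrow(T^d)^I$ to a map $\tilde\iota_A\colon B\to(T^d)^I$, and combine it with $\iota_C\circ\pi\colon B\to(T^d)^J$ (where $\iota_C\colon C\hookrightarrow(T^d)^J$) to produce an embedding $B\hookrightarrow(T^d)^{I\sqcup J}$, injectivity following by a standard diagram chase on $A$ and $B/A\cong C$. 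Closure of $\Cogen T^d$ under submodules and products being automatic, it is the torsion-free class of a torsion pair $(\T_{T^d},\F_{T^d})$ whose torsion class is $\{X\mid\Hom_S(X,T^d)=0\}$, and whose associated idempotent radical is precisely $\Rej_{T^d}$. The main obstacle is part (1): choosing the correct indexing $\Lambda$, verifying surjectivity of the connecting map via projections, and then systematically checking the four Ext-vanishings needed for (C2); part (2) is then a direct consequence of the preliminary Ext-vanishing and a diagram chase.
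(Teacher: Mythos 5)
Your proposal is correct. Note that the paper itself gives no argument here beyond citing \cite[Theorem 2.11]{CTT} for (1) and \cite[Lemma 2.6]{CTT} for (2); what you have written is essentially a faithful reconstruction of those cited proofs --- the dual Bongartz completion $0\to (T^d)^{\Lambda}\to E\to W\to 0$ indexed by $\Ext^1_S(W,T^d)$ for part (1), and the standard ``extension-closure of $\Cogen T^d$ via $\Ext^1_S(C,T^d)=0$'' argument for part (2). All the individual steps check out: the surjectivity of the connecting map via the projections $\pi_\mu$, the upgrade from $\Ext^1$-vanishing to membership in ${}^\perp T^d$ using i.d.$\,T^d\le 1$, the four-summand check of (C2), and the identification of the idempotent radical of the resulting torsion pair with $\Rej_{T^d}$. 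So your route is the same as the paper's in substance, merely self-contained where the paper defers to the literature.
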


\begin{proof}
(1) See \cite[Theorem 2.11]{CTT}.

(2) Follows by \cite[Lemma 2.6]{CTT} and $\Cogen\  T^d\subseteq {}^\perp T^d$, since $T^d$ is a partial $1$-cotilting module. 
\end{proof}

 We define now the subcategories of $\Mod S$ which will play a crucial role in establishing the equivalences which will be proved by Theorem~\ref{T:equivalences}.
 
 First, we recall the notion of  perpendicular categories. If $\C$ is a category of $R$-modules the \emph{right perpendicular category} $\C_{\perp}$  is
 
\[\C_{\perp}=\{M\mid \Hom_R(\C, M)=\Ext^i_R(\C, M)=0\}\]
and analogously the \emph{left perpendicular category}  is
\[_{\perp}\C=\{M\mid \Hom_R(M, \C)=\Ext^i_R(M, \C)=0\}\]

We will also use the following definitions.
 \begin{Def} Let $\C$ be a subcategory of an abelian category $\A$.
\begin{enumerate}
\item[(1)] $\C$ has the $2$ out of $3$ property if for every short exact sequence
\[0\to L\to M\to N\to 0\]
in $\A$ with two terms in $\C$, then the third term is also in $\C$.
\item[(2)] $\C$ is a Serre subcategory if for every short exact sequence
\[0\to L\to M\to N\to 0\] in $\A$, $M$ is in $\C$ if and only if $L$ and $N$ are in $\C$.
\end{enumerate}
\end{Def}

\begin{Prop}\label{P:the class-E}. Let 
\[\E=\{N\in \Mod S\mid N\otimes_ST=\Tor_1^S(N, T)=0\}\]
The following hold:
\begin{enumerate}
\item[(1)] $\E=\{N\in \Mod S\mid \Ext^1_S(N,T^d)=\Hom_S(N, T^d)=0\}$, that is $\E={}_\perp \{T^d\}$.
\item[(2)] $\E$ is closed under direct sums, direct summands and has the $2$ out of $3$ property. 
%\item[(3)] $\E= \F_D\cap \T_{T^d}$.
\end{enumerate}
\end{Prop}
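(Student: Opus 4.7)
My plan for (1) is to exploit the Hom-tensor adjunction together with the injective cogenerator property of $W$. Since $T^d = \Hom_R(T, W)$ with $W$ injective in $\Mod R$, the adjunction produces natural isomorphisms $\Ext^i_S(N, T^d) \cong [\Tor_i^S(N, T)]^d$ for all $i \geq 0$: the case $i \geq 1$ is exactly Proposition~\ref{P:partial-cotilting}(1), and the case $i = 0$ is the adjunction identity $\Hom_S(N, \Hom_R(T, W)) \cong \Hom_R(N \otimes_S T, W)$. Because $W$ is a cogenerator of $\Mod R$, a right $R$-module $A$ vanishes if and only if $A^d = 0$; hence the vanishing of $\Tor_i^S(N, T)$ is equivalent to that of $\Ext^i_S(N, T^d)$. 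Applying this equivalence for $i = 0, 1$ and invoking i.d.$(T^d)_S \leq 1$ from Proposition~\ref{P:partial-cotilting}(1) (which forces $\Ext^i_S(N, T^d) = 0$ for $i \geq 2$ automatically), the defining conditions of $\E$ translate verbatim into those of ${}_\perp\{T^d\}$.

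For (2), closure under direct sums and direct summands is automatic from the additivity of $-\otimes_S T$ and $\Tor_1^S(-, T)$ and their commutation with arbitrary direct sums. For the 2 out of 3 property, given a short exact sequence $0 \to L \to M \to N \to 0$ of right $S$-modules, I would apply $-\otimes_S T$ and read off the Tor long exact sequence. The decisive ingredient is that $\Tor_2^S(N, T) = 0$, a consequence of p.d.\,$_S T \leq 1$ (Lemma~\ref{L:S-sequence}(4)); this truncates the long exact sequence into
\[0 \to \Tor_1^S(L, T) \to \Tor_1^S(M, T) \to \Tor_1^S(N, T) \to L \otimes_S T \to M \otimes_S T \to N \otimes_S T \to 0.\]
A direct inspection of this six-term exact sequence in each of the three cases (two of $L$, $M$, $N$ lying in $\E$) forces the two relevant groups attached to the third module to vanish, completing (2).

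I do not anticipate any real obstacle: both parts rest on the Ext-Tor duality of Proposition~\ref{P:partial-cotilting}(1) and on the bound p.d.\,$_S T \leq 1$, and everything else is bookkeeping with exact sequences. One could alternatively derive (2) from (1) by running the analogous long exact sequence for $\Hom_S(-, T^d)$ and $\Ext^1_S(-, T^d)$ truncated by i.d.$(T^d)_S \leq 1$, but the Tor version is the more direct route.
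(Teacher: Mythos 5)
Your proof is correct and fills in exactly the "usual homological formulas" and "direct check" that the paper's own (very terse) proof appeals to: the duality $\Ext^i_S(N,T^d)\cong[\Tor_i^S(N,T)]^d$ together with $W$ being a cogenerator for (1), and the six-term Tor sequence truncated by p.d.$_ST\leq 1$ for (2). No comments needed beyond that.
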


\begin{proof} (1) The equality follows by  the usual homological formulas and by the fact that i.d.$T^d\leq 1$. 

(2) Follows by a direct check.
\end{proof}%

Consider now the right perpendicular category $\M$ of $\E$, that is
\[\M=\E_{\perp}=\{M\in \Mod S\mid \Hom_S(\E, M)=0=\Ext^1_S(\E, M) \}.
\]

The next theorem, inspired by Facchini's Theorems, is the generalization of the equivalences proved by Brenner and Bluter ~\cite{BB} in the case of a classical $1$-tilting module (that is finitely generated) over artin algebras.

%The proposition can be proved as in \cite{F1} and \cite{F2}.
\begin{Thm}(\cite{F1}, \cite{F2})\label{T:equivalences} Let $R$ be a ring, $T_R$ a $1$-tilting module as in Notation~\ref{N:notation} and let $(\T,\F)$ be the tilting torsion pair associated to $T$. Let $S=\End_R(T)$.
The following hold.
\begin{enumerate}
\item[(1)] There is an equivalence 
\[\Mod R\supseteq \T\overset{\Hom_R(T, -)}\longrightarrow\Y\subseteq \Mod S\]
where $\Y=\F_{T^d}\cap \M$ with inverse $-\otimes_ST$.
 \item[(2)]  There is an equivalence 
\[\Mod R\supseteq \F\overset{\Ext^1_R(T, -)}\longrightarrow\X\subseteq \Mod S\]
where $\X=\T_{T^d}\cap \M$ with inverse $\Tor_1^S(-, T)$.
\end{enumerate}
\end{Thm}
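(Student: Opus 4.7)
The plan is to prove both equivalences in parallel, leveraging the adjoint pair $(-\otimes_S T,\ \Hom_R(T,-))$ together with its derived enhancement $(-\overset{\LL}\otimes_S T,\ \R\Hom_R(T,-))$.

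For part (1), I first verify that $\Hom_R(T,-)$ sends $\T$ into $\Y=\F_{T^d}\cap\M$. Embedding $M\hookrightarrow W^I$ via the cogenerator property of $W$ and applying the left-exact functor $\Hom_R(T,-)$ yields a monomorphism $\Hom_R(T,M)\hookrightarrow(T^d)^I$, placing $\Hom_R(T,M)$ in $\Cogen T^d=\F_{T^d}$. Membership in $\M$ follows from the derived adjunction $\R\Hom_S(N,\R\Hom_R(T,M))\simeq\R\Hom_R(N\overset{\LL}\otimes_S T,M)$: for $M\in T^\perp=\T$ the complex $\R\Hom_R(T,M)$ reduces to $\Hom_R(T,M)$ in degree $0$, while for $N\in\E$ the vanishings $N\otimes_S T=\Tor_1^S(N,T)=0$ combined with $\mathrm{p.d.}\,{}_S T\le 1$ give $N\overset{\LL}\otimes_S T\simeq 0$, so $\Ext_S^i(N,\Hom_R(T,M))=0$ for every $i\ge 0$. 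The reverse containment $N\otimes_S T\in\Gen T=\T$ is automatic, and the counit $\epsilon_M$ is iso on $\T$ by Proposition~\ref{P:Hom-Tor}(1).

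The crux of (1) is proving the unit $\eta_N\colon N\to\Hom_R(T,N\otimes_S T)$ is iso for $N\in\Y$. The triangle identity combined with $\epsilon_{N\otimes_S T}$ being iso (since $N\otimes_S T\in\T$) forces $\eta_N\otimes_S T$ to be an isomorphism. Setting $K=\Ker\eta_N$ and $C=\Coker\eta_N$, a $\Tor$-long-exact-sequence chase using Proposition~\ref{P:Hom-Tor}(2) together with the vanishing $\Tor_1^S(N,T)=0$ (deduced from $\F_{T^d}\subseteq{}^\perp T^d$ via Proposition~\ref{P:partial-cotilting}(1)) places both $K$ and $C$ in $\E$. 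Since $N\in\M$ kills $\Hom_S(\E,N)$, the inclusion $K\hookrightarrow N$ must be the zero map, so $K=0$; since $N\in\M$ also kills $\Ext_S^1(\E,N)$, the short exact sequence $0\to N\to\Hom_R(T,N\otimes_S T)\to C\to 0$ splits, exhibiting $C$ as a direct summand of $\Hom_R(T,N\otimes_S T)\in\F_{T^d}$. Then $C$ simultaneously lies in $\Cogen T^d$ and, being in $\E$, satisfies $\Hom_S(C,T^d)\cong\Hom_R(C\otimes_S T,W)=0$; hence $C=0$.

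For part (2) I produce the required natural isomorphisms by an injective-hull trick. Given $M\in\F$, take an injective hull $M\hookrightarrow E(M)$ in $\Mod R$; both $E(M)$ and $E(M)/M$ lie in $\T$ (injectivity plus closure of the torsion class $\T$ under quotients), so applying $\Hom_R(T,-)$ to $0\to M\to E(M)\to E(M)/M\to 0$ yields a short exact sequence $0\to\Hom_R(T,E(M))\to\Hom_R(T,E(M)/M)\to\Ext_R^1(T,M)\to 0$ in $\Mod S$. Applying $-\otimes_S T$, using Proposition~\ref{P:Hom-Tor}(2) to kill the preceding $\Tor_1$ and Proposition~\ref{P:Hom-Tor}(1) to identify the first two terms with $E(M)$ and $E(M)/M$, comparison with the original short exact sequence produces both the natural isomorphism $M\cong\Tor_1^S(\Ext_R^1(T,M),T)$ and the vanishing $\Ext_R^1(T,M)\otimes_S T=0$, so $\Ext_R^1(T,M)\in\T_{T^d}$. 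Membership $\Ext_R^1(T,M)\in\M$ follows from the derived-adjunction argument of part (1), now using $\R\Hom_R(T,M)=\Ext_R^1(T,M)[-1]$ for $M\in\F$. The dual direction --- showing $\Tor_1^S(N,T)\in\F$ and $N\cong\Ext_R^1(T,\Tor_1^S(N,T))$ for $N\in\X$ --- is the step I expect to be most delicate: there is no canonical ``projective hull'' in $\Mod S$ to mirror the injective-hull construction, so one instead takes a projective presentation $0\to K\to P\to N\to 0$, tensors with $T$ (using $N\otimes_S T=0$ since $N\in\T_{T^d}$), and runs the dual $\Tor$/$\Ext$ chase, comparing back through the unit and counit of part (1) and concluding by a second kernel/cokernel-in-$\E$ argument.
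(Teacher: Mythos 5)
Your overall route is the same as the paper's (which itself defers to Facchini's arguments in \cite{F1}, \cite{F2}): show the functors land in the right subcategories, use Proposition~\ref{P:Hom-Tor}(1) for the counit on $\T$, and prove the unit is an isomorphism on $\Y$ by trapping its kernel and cokernel in $\E$ and killing them with the $\M$-condition. Part (1) as you write it is complete and correct: the derived-adjunction argument for membership in $\M$ (using $N\overset{\LL}\otimes_ST\simeq 0$ for $N\in\E$) is a clean substitute for Facchini's direct computation, the vanishing $\Tor_1^S(N,T)=0$ for $N\in\F_{T^d}$ via Proposition~\ref{P:partial-cotilting}(1) is right, and the split-summand argument ($C\in\E\cap\Cogen T^d\Rightarrow C=0$) closes the loop. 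The first half of part (2) (injective hull, identification of $M$ with $\Tor_1^S(\Ext^1_R(T,M),T)$) is also correct.

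The one genuine gap is the converse direction of part (2), which you yourself flag as a sketch. As stated, the plan to ``compare back through the unit'' along a projective presentation $0\to K\to P\to N\to 0$ stumbles on the fact that for an \emph{infinitely generated} $T$ the unit $\eta_P\colon P\to\Hom_R(T,P\otimes_ST)$ is \emph{not} an isomorphism for projective $P$ (already $\Hom_R(T,T^{(\lambda)})\ne S^{(\lambda)}$ in general), so you cannot simply identify the two rows termwise. The repair is within reach of your own machinery: your kernel/cokernel-in-$\E$ argument applies to \emph{any} $S$-module $L$ with $\Tor_1^S(L,T)=0$ (only $\eta_L\otimes_ST$ invertible is needed, which the triangle identity gives for free), hence to both $K$ and $P$; comparing the complex $[K\to P]$ with $[\Hom_R(T,K\otimes_ST)\to\Hom_R(T,P\otimes_ST)]$ then places $\Ker\theta_N$, $\Coker\theta_N$ and $\Hom_R(T,\Tor_1^S(N,T))$ in $\E$, and one concludes using $N\in\M$, the $\M$-membership of $\Ext^1_R(T,\Tor_1^S(N,T))$, and the observation (which you should state explicitly) that $\E\cap\M=0$ and $\E\cap\Cogen T^d=0$. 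Without spelling this out, the last step of (2) is not yet a proof.
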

\begin{proof}  The proof of the two equivalences is essentially the same as \cite{F1} and \cite{F2} with the suitable translation of the terminology.

In those papers the modules named $I$-divisible are the modules in $\T_{T^d}$, that is the right $S$-modules $N$ such that $\Hom_S(N, T^d)=$ or equivalently, $N\otimes_ST=0$. The modules called $I$-reduced are the modules in $\F_{T^d}$.
Moreover, the modules in the class $\E$ are called $I$-divisible and $I$-torsion free.

(1) Is proved by the same arguments as in \cite{F1}, once it is observed that the modules named $I$-cotorsion in that paper are the modules in the class $\F_{T^d}\cap \M$.

First one shows as in \cite[Theorem 7.1]{F1}, that for every $M\in \Mod R$, $\Hom_R(T,M)\in \F_{T^d}\cap \M$. 
Then one uses that, by Proposition~\ref{P:Hom-Tor}~(1), $\phi\colon \Hom_R(T, M)\otimes_ST\to M$ is an isomorphism  if and only if $M$ in the tilting torsion class $\T$.
Finally one verifies that  $\eta  \colon N\to\Hom_R(T, N\otimes_ST)$ is an isomorphism if and only if $N$ is a right $S$-module in the class $ \F_{T^d}\cap \M$. This is obtained following the proofs of \cite[Theorem 7.2, 7.3]{F1}.

(2) This is proved as in \cite{F2} noticing that there, slightly differently from the definitions in \cite{F1}, the modules named $I$-cotorsion are the modules in the class $ \M$.

First, as in \cite[Lemma 1]{F2}, one proves that $\Ext^1_R(T, M)$ is in the class $\T_{T^d}\cap \M$, for every right $R$-module $M$.
Secondly one shows that, if $M$ in the torsion free class  $\F$, then the natural homomorphism $\xi\colon \Tor^S_1(\Ext^1_R(T, M), T)\to M$ is an isomorphism (see \cite[Lemma 1]{F2}).

Then, one proves that $ \Tor^S_1(N,T)\in \F$, for every $N\in\T_{T^d}\cap \M$ and that the natural homomorphism $\theta\colon N\to \Ext^1_R(T, \Tor^S_1(N,T))$ is an isomorphism if and only if $N\in\T_{T^d}\cap \M$ (see \cite[Lemma 2]{F2}).

 \end{proof}
\begin{Rem}
 If $T$ is a finitely presented $1$-tilting module, then the dual module $T^d$  is a $1$-cotilting module over the endomorphism ring of $T$.
Hence, in this case, the category $\E={}_\perp T^d$ is zero, so $\M$  coincides with $\Mod S$ and $(\X, \Y)$ is the cotilting torsion pair associated to the $1$-cotilting module $T^d$. Thus, we recover both Brenner and Butler's  Theroem for the case of artin algebras and Colby-Fuller Tilting theorem over an arbitrary ring. So, Theorem~\ref{T:equivalences} can be viewed as the generalization to the case of infinitely generated $1$-tilting modules of Brenner and Butler's and Colby-Fuller's Theorems. 
 
\end{Rem}

The categories $\Ker( -\otimes_ST)$ and $\Ker(\Tor_1^S(-,T)$ are not Serre subcategories of $\Mod S$ in general. Thus, we cannot perform the corresponding quotient categories in Gabriel sense. However, we can localize the category $\Mod S$ at a suitable multiplicative system as we are going to explain.

In the next proposition we use the terminology as in the Gabriel and Zisman's book \cite{GZ}.
\begin{Prop}\label{P:calculus-of-fractions} Let $T$ be a $1$ tilting right $R$-module as in Notation~\ref{N:notation} and let $(\T,\F)$ be the associated torsion pair in $\Mod R$. Let $\Sigma$ be the system of morphisms $u\in \Mod S$ such that $u\otimes_S1_T$ is invertible in $\Mod R$.
Then the following hold:
\begin{enumerate}
\item[(1)] $\Sigma$ admits a calculus of left fractions.
\item[(2)] There is an equivalence $\rho\colon \Mod S[\Sigma^{-1}]\to \T$ such that $ \rho\circ q=-\otimes_ST$ where $q\colon \Mod S\to  \Mod S[\Sigma^{-1}]$ is the canonical localization functor.
\item[(3)] There is an equivalence between  $\Mod S[\Sigma^{-1}]$ and the category $\Y=\F_{T^d}\cap \M$.
\end{enumerate}
\end{Prop}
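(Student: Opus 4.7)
The approach is to realize $F := -\otimes_S T \colon \Mod S \to \Mod R$ as a localization functor with essential image $\T$. First I would note that $N\otimes_S T$ is an epimorphic image of a direct sum of copies of $T$, hence $F$ takes values in $\T = \Gen T$, so $F$ restricts to a functor $F\colon \Mod S \to \T$. The tensor-hom adjunction then gives an adjoint pair with right adjoint $G := \Hom_R(T,-) \colon \T \to \Mod S$, and by Proposition~\ref{P:Hom-Tor}(1) the counit $\Hom_R(T,M)\otimes_S T \to M$ is an isomorphism for every $M \in \T = T^\perp$. Consequently $G$ is fully faithful and $(F,G)$ exhibits $\T$ as a reflective subcategory of $\Mod R$.

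At this point I would invoke Proposition~1.3 of \cite{GZ}, exactly as hinted in the acknowledgement: the abstract result that if $F$ is a left adjoint to a fully faithful functor $G$, then $F$ identifies its target with the localization of its source at the class of morphisms it inverts, and that this class admits a calculus of left fractions. Applied to $(F, G)$, this delivers both (1) and (2) at once: the class $\Sigma$ of morphisms made invertible by $F$ admits a calculus of left fractions, and $F$ factors as $\Mod S \overset{q}\to \Mod S[\Sigma^{-1}] \overset{\rho}\to \T$ with $\rho$ an equivalence satisfying $\rho\circ q = -\otimes_S T$.

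For (3), I would simply compose the equivalence $\rho$ obtained in (2) with the equivalence of Theorem~\ref{T:equivalences}(1), namely $\Hom_R(T,-)\colon \T \to \Y = \F_{T^d}\cap \M$, whose quasi-inverse is $-\otimes_S T$. The composition yields the desired equivalence $\Mod S[\Sigma^{-1}] \simeq \Y$.

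The main obstacle is not a direct computation but recognising the correct abstract setup: once the counit of the adjunction is identified as an isomorphism on the tilting class, the Ore and cancellation axioms are handed to us for free by the general Gabriel--Zisman result. A direct verification of those axioms for $\Sigma$ would be considerably more painful, so outsourcing the work to \cite[Proposition~1.3]{GZ} is the cleanest route, and it also makes transparent why the equivalence $\rho$ satisfies $\rho \circ q = -\otimes_S T$.
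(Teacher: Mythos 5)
Your proposal is correct and follows essentially the same route as the paper: identify $(-\otimes_S T,\ \Hom_R(T,-))$ as an adjoint pair between $\Mod S$ and $\T$ with invertible counit (Proposition~\ref{P:Hom-Tor}(1)), outsource the localization statement to Gabriel--Zisman, and obtain (3) by composing with Theorem~\ref{T:equivalences}(1). The only cosmetic difference is that the paper splits the citation, using \cite[Proposition 1.3]{GZ} for the fully faithfulness of the right adjoint and the factorization through $\Mod S[\Sigma^{-1}]$, and \cite[2.5(b)]{GZ} specifically for the calculus of left fractions.
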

\begin{proof} (1) Note that $N\otimes_ST\in \T$ for every right $S$-module $N$ and $\T$ is a full subcategory of $\Mod R$. Hence the functor $H=\Hom_R(T,-)\colon\T\to \Mod S$ is right adjoint to the functor $G=-\otimes_ST\colon \Mod S\to \T$.
By Proposition~\ref{P:Hom-Tor}~(1) the counit adjunction $\phi\colon GH\to 1_{\T}  $ is invertible and, by \cite[Proposition 1.3]{GZ}, $H$ is a fully faithful functor. Hence $\Sigma$ admits a calculus of left fraction by \cite[2.5~(b)]{GZ}.

(2) Follows by Proposition~\ref{P:Hom-Tor}~(1) and by \cite[Proposition 1.3]{GZ}.

(3) Combine (2) with  Theorem~\ref{T:equivalences}(1).
\end{proof}

\begin{Rem} We couldn't get an analogous result  for the pair of functors $\Ext^1_R(T,-)$ and $\Tor_1^S(-, T)$ because they are not an adjoint pair in general.

 Moreover, we don't know wether the category of fractions $\Mod S[\Sigma^{-1}]$, considered in Proposition~\ref{P:calculus-of-fractions}, is the quotient of $\Mod S$ modulo a suitable subcategory.
\end{Rem}

The above remark indicate that a better understanding of the whole situation can be obtained in the setting of derived categories.

\section{Derived equivalence}

Before stating the main result of this section we recall some notions and facts about derived categories which will be used later on.

Let $\D(R)$ and $\D(S)$ be the unbounded derived categories of $\Mod R$  and $\Mod S$ respectively. The following hold.

\begin{itemize}
\item (B\"okstedt and Neeman~\cite{BN} or Spaltenstein~\cite{S}) For every complex $M^{\cdot}\in \D(R)$ there is a quasi isomorphism $M^{\cdot}\to I^{\cdot}$ where  $ I^{\cdot}$ is a complex with injective terms. $ I^{\cdot}$ is also denoted by $\underline{{\mathbf i}}M^{\cdot}$ and called a $K$-injective or  fibrant resolution of $M^{\cdot}$.\\
Symmetrically, for every complex $M^{\cdot}\in \D(R)$ there is a quasi isomorphism $P^{\cdot}\to M^{\cdot}$ where  $ P^{\cdot}$ is a complex with projective terms. $ P^{\cdot}$ is also denoted by $\underline{{\mathbf p}}M^{\cdot}$ and called a $K$-projective or  cofibrant resolution of $M^{\cdot}$.

\item (\cite[Theorem 3.2 (b)]{Ke} and B\"okstedt and Neeman~\cite{BN}) Every additive functor $F$ defined on the module category $\Mod R$ admits a total right derived functor $\R F$ and a total left derived functor $\LL F$ defined on $\D(R)$.\\
Moreover, if $M^{\cdot}$ is a complex in $\D(R)$, then $\R  F (M^{\cdot})= F(\underline{{\mathbf i}}M^{\cdot})$ and $\LL  F (M^{\cdot})= F(\underline{{\mathbf p}}M^{\cdot})$. (We denote by $F$ also the functor induced on the homotopy category.)
\item (\cite[Theorem 3.2 (c)]{Ke})
If $T$ is an $S$-$R$-bimodule, then the adjoint pair $(G,H)$ of functors given by:
\[{H=\Hom_R(T,-)\colon \Mod R}\doublearrow{}{}{\Mod S\colon G=-\otimes _ST}\]
induces an adjoint pair of total derived functors
\[{ \R H=\R \Hom_R(T,-)\colon \D(R)}\doublearrow{}{}{ \D( S)\colon \LL G=-\overset{\LL}\otimes _ST}
\]

\end{itemize}
\begin{Thm}\label{T:derived-equivalence} Let $T_R$ be a right $1$-tilting module as in Notation~\ref{N:notation} and with endomorphism ring $S$.
 The following hold:
\begin{enumerate}
%\item[(1)] The adjoint pair $(G, H)$ induces an adjoint pair of total derived functors $\LL G,  \R H$ between $\D(S)$ and $\D(R)$.
\item[(1)] The counit adjunction morphism 
\[\eta\colon\LL G\circ \R H\to Id_{\D(R)}\] 
is invertible.
\item[(2)] The functor  $\R H\colon \D(R)\to \D(S)$ is fully faithful.

\item[(3)] There is a triangle equivalence $\Theta \colon \D(S)[\Sigma^{-1}]\to \D(R)$ such that $\LL G=\Theta\circ q$ where $q$ is the canonical quotient functor $q\colon \D(S)\to \D(S)[\Sigma^{-1}]$.
\item[(4)] If $\Sigma$ is the system of morphisms $u\in \D(S)$ such that $\LL G u$ is invertible in $\D(R)$, then $\Sigma$ admits a calculus of left fractions and the category $\D(S)[\Sigma^{-1}]$ coincides with the quotient category $\D(S)$ modulo the full triangulated subcategory $\Ker (\LL G)$ of the objects annihilated by the functor $\LL G$.
\end{enumerate}
\end{Thm}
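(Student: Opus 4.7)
The overall plan is to prove (1) by a direct cohomological computation plus a dévissage argument, extract (2) as a formal consequence, and then derive (3) and (4) by repeating the argument of Proposition~\ref{P:calculus-of-fractions} in the triangulated setting.

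For (1), since $\LL G\circ \R H$ and $Id_{\D(R)}$ are both triangulated endofunctors, the class
\[
\mathcal{U}=\{M^{\cdot}\in \D(R)\mid \eta_{M^{\cdot}}\text{ is invertible}\}
\]
is automatically a triangulated subcategory of $\D(R)$. I would first verify that $R\in \mathcal{U}$: sequence (a) yields a triangle $R\to T\to T_1\to R[1]$ in $\D(R)$; applying $\R H$ uses $\R\Hom_R(T,T)=S$ (from (T2)) and $\R H(T_1)=\Hom_R(T,T_1)$ concentrated in degree zero (since $T_1\in \Add T$), and applying $\LL G$ reproduces the original triangle because $\LL G(S)=T$ and $\LL G(\Hom_R(T,T_1))=T_1$, the last equality following from Proposition~\ref{P:Hom-Tor} (since $T_1\in \T$) together with the bound p.d.${}_S T\leq 1$ from Lemma~\ref{L:S-sequence} (which kills all $\Tor^S_i(-,T)$ for $i\geq 2$). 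I would then show that $\mathcal{U}\supseteq \Mod R$: for an arbitrary module $M$, the complex $\R H(M)$ has cohomology only in degrees $0$ and $1$, namely $\Hom_R(T,M)$ and $\Ext^1_R(T,M)$. Proposition~\ref{P:Hom-Tor} handles $M\in \T$, Theorem~\ref{T:equivalences}~(2) handles $M\in \F$ (using again that p.d.${}_S T\leq 1$), and the torsion triangle $tM\to M\to M/tM$ then handles a general module by the triangulated five lemma applied to $\eta$. The genuinely delicate step is extending the conclusion from $\Mod R$ to all of $\D(R)$: truncation triangles suffice for bounded complexes, but the unbounded case requires $\eta$ to be compatible with the (homotopy) limits and colimits implicit in the K-injective and K-projective resolutions computing $\R H$ and $\LL G$. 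A natural strategy is to exploit that $T_R$ and ${}_S T$ both have projective dimension at most one, so that $\R H$ and $\LL G$ can be represented termwise by short two-term constructions on suitable resolutions; this is presumably the content of the lemma on the counit referenced in the introduction.

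For (2), once (1) is known, invertibility of the counit together with~\cite[Proposition 1.3]{GZ} forces $\R H$ to be fully faithful. For (3) and (4), the argument is the triangulated analogue of Proposition~\ref{P:calculus-of-fractions}: fully faithfulness of $\R H$ yields, via~\cite[2.5~(b)]{GZ}, that $\Sigma$ admits a calculus of left fractions, and~\cite[Proposition 1.3]{GZ} produces the equivalence $\Theta\colon \D(S)[\Sigma^{-1}]\to \D(R)$ satisfying $\Theta\circ q=\LL G$; its triangulated character is inherited from the triangulated functor $\LL G$. Finally, because $\LL G$ is triangulated, $\Ker(\LL G)$ is a thick triangulated subcategory of $\D(S)$, and a morphism $u$ lies in $\Sigma$ if and only if $\Cone(u)\in \Ker(\LL G)$; hence $\D(S)[\Sigma^{-1}]$ coincides with the Verdier quotient of $\D(S)$ by $\Ker(\LL G)$.

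The principal obstacle is the passage from $\Mod R$ to arbitrary unbounded complexes in verifying that $\eta$ is an isomorphism. Parts (2), (3) and (4) all reduce formally to this step, but the extension is subtle because $T$ need not be compact in $\D(R)$, so $\R H$ need not a priori commute with arbitrary coproducts; a careful interplay between the two-term resolutions available on both sides and the K-projective/K-injective resolutions for unbounded complexes is required.
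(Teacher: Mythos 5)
Your derivation of (2)--(4) from (1) matches the paper's: both rest on \cite[Proposition 1.3]{GZ} and on the observation that $\LL G$ is a triangle functor, so that $\Ker(\LL G)$ is a full triangulated subcategory and $\Sigma$ consists exactly of the morphisms whose cone lies in $\Ker(\LL G)$. The problem is in (1), and it is precisely the step you flag yourself: the passage from modules (or bounded complexes) to arbitrary unbounded complexes. Your d\'evissage --- $R\in\mathcal{U}$ via the triangle coming from sequence (a), then $\Mod R\subseteq\mathcal{U}$ via the torsion triangle $tM\to M\to M/tM$ and Propositions~\ref{P:Hom-Tor} and Theorem~\ref{T:equivalences}(2), then truncation for bounded complexes --- is correct as far as it goes, but it cannot reach unbounded complexes: since $T$ is infinitely generated it need not be compact, $\R H$ need not commute with the homotopy colimits of truncations, and you offer no substitute argument. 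Saying that the missing step ``is presumably the content of the lemma on the counit'' is an acknowledgement of the gap, not a proof.

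The paper closes exactly this gap by abandoning d\'evissage altogether and computing $\LL G(\R H(M^{\cdot}))$ directly and uniformly for an arbitrary $M^{\cdot}$. Concretely: replace $M^{\cdot}$ by a K-injective resolution $I^{\cdot}$, set $C^{\cdot}=\Hom_R(T,I^{\cdot})$, and compute $C^{\cdot}\overset{\LL}\otimes_S T$ with the two-term projective resolution $0\to I\to S\to 0$ of ${}_ST$ from Lemma~\ref{L:S-sequence}, so that $\LL G(C^{\cdot})=\Cone(1\otimes\delta)$. The two decisive inputs, which your sketch never invokes, are (i) Proposition~\ref{P:Hom-Tor}(2) applied \emph{termwise}: $\Tor_1^S(\Hom_R(T,I^n),T)=0$ for every $n$, which makes $0\to C^{\cdot}\otimes_SI\to C^{\cdot}\otimes_SS\to C^{\cdot}\otimes_ST\to 0$ exact as a sequence of complexes and hence identifies $\Cone(1\otimes\delta)$ with $C^{\cdot}\otimes_ST$ up to quasi-isomorphism by comparing the two long exact cohomology sequences; and (ii) the fact that each injective $I^n$ lies in the tilting class $T^{\perp}$, so that Proposition~\ref{P:Hom-Tor}(1) gives a termwise isomorphism of complexes $\Hom_R(T,I^{\cdot})\otimes_ST\cong I^{\cdot}$. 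No boundedness, compactness, or limit/colimit compatibility is needed. To repair your proof you should replace the d\'evissage by this termwise computation (or supply an argument that $\mathcal{U}$ is closed under the relevant homotopy limits and colimits, which is not available here).
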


We first prove condition (3) of Theorem~\ref{T:derived-equivalence} by a lemma.
\begin{Lem}\label{L:counit} In the assumptions of Theorem~\ref{T:derived-equivalence}, the counit adjunction morphism 
\[\eta\colon\LL G\circ \R H\to Id_{\D(R)}\] 
is invertible.
\end{Lem}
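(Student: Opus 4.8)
The goal is to show that the counit $\eta\colon \LL G\circ \R H\to Id_{\D(R)}$ is invertible, where $H=\Hom_R(T,-)$ and $G=-\otimes_ST$. Since both $\LL G\circ \R H$ and $Id_{\D(R)}$ are triangulated functors and $\eta$ is a natural transformation between them, it suffices to check that $\eta_{M^{\cdot}}$ is an isomorphism on a class of objects $M^{\cdot}$ that generates $\D(R)$ as a triangulated category closed under coproducts; indeed, the full subcategory of objects on which $\eta$ is invertible is a localizing subcategory. The natural choice is to reduce to $M^{\cdot}=R$, viewed as a complex concentrated in degree $0$, since $\{R\}$ is a compact generator of $\D(R)$, and then invoke that $\LL G\circ \R H$ commutes with coproducts (both $\R H$ and $\LL G$ have right adjoints, or at any rate $\R H$ preserves coproducts because $T$ is finitely generated as a left $S$-module by Lemma~\ref{L:S-sequence}, while $\LL G$ always preserves coproducts).

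\textbf{Key steps.} First I would compute $\R H(R)=\R\Hom_R(T,R)$. Using the $K$-injective resolution, or more directly the fact that p.d.$\,T\leq 1$ together with the defining sequence $(\mathrm{a})$ $0\to R\overset{\mu}\to T\to T_1\to 0$, one identifies $\R\Hom_R(T,R)$ with a two-term complex. Here the crucial input is the presentation $(\mathrm{b})$ $0\to I\to S\to T\to 0$ of $\,_ST$ from Lemma~\ref{L:S-sequence} together with $I\cong Se$ and $\End_S(T)\cong R$; this lets me present $\R\Hom_R(T,R)$ explicitly as a bounded complex of finitely generated projective left $S$-modules, essentially the complex $[Se\to S]$ or a shift/variant thereof, which is $K$-projective. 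Second, I would apply $\LL G=-\overset{\LL}\otimes_ST$ to this complex; since the complex is already $K$-projective (a bounded complex of projectives), $\LL G$ is computed termwise, and I get $[Se\otimes_ST\to S\otimes_ST]=[eT\to T]$. Third, I would identify this two-term complex with $R$ in $\D(R)$: using the description of $\phi$ and $e$ from Notation~\ref{N:notation} (where $\Ker\phi=wR$, $eT=\phi(T)$), the map $eT\to T$ appearing here should be, up to the identifications, the map whose cokernel is $T_1$ and whose kernel computation recovers $R\cong wR$ via $\mu$. Finally, I would check that under all these identifications the map being applied is exactly $\eta_R$, so that $\eta_R$ becomes the quasi-isomorphism $[eT\to T]\xrightarrow{\sim} R$ exhibiting $H^0=R$ and $H^{-1}=0$.

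\textbf{Main obstacle.} The hard part is the bookkeeping in the third and fourth steps: making sure that the explicit two-term complex obtained after deriving really is quasi-isomorphic to $R$ concentrated in degree $0$, and — more delicately — that the comparison map is genuinely the counit $\eta_R$ rather than merely some abstract isomorphism. Getting the counit right requires chasing the adjunction $(\LL G,\R H)$ through the chosen $K$-projective/$K$-injective resolutions and through the isomorphism $\End_S(T)\cong R$, which is exactly the kind of step where a careless identification can hide a sign or a shift. I expect this is why the author thanks Keller: pinning down that the derived counit on the generator $R$ agrees with the module-level counit $\phi$ of Proposition~\ref{P:Hom-Tor}(1), for which the isomorphism $\LL G\R H(R)\simeq R$ already holds at the level of homology (since $\R H(R)$ has homology $\Hom_R(T,R)$ and $\Tor^S_1(\Hom_R(T,R),T)=0$ by Proposition~\ref{P:Hom-Tor}(2), while $\Hom_R(T,R)\otimes_ST\cong R$ by Proposition~\ref{P:Hom-Tor}(1) as $R\in T^\perp$), is the conceptual heart of the argument; the localizing-subcategory reduction then propagates it to all of $\D(R)$.
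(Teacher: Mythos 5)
Your strategy---check $\eta$ on the compact generator $R$ and propagate to all of $\D(R)$ because the objects where $\eta$ is invertible form a localizing subcategory---breaks down at the coproduct step, and this is not a technicality but precisely the difficulty created by $T$ being infinitely generated. For the subcategory $\{M^{\cdot}\mid \eta_{M^{\cdot}}\text{ invertible}\}$ to be closed under coproducts you need the canonical map $\bigoplus_i \LL G\R H(M_i^{\cdot})\to \LL G\R H(\bigoplus_i M_i^{\cdot})$ to be invertible, and since $\LL G$ does preserve coproducts this reduces to $\R H=\R\Hom_R(T,-)$ preserving them (or at least to its failure to do so being killed by $\LL G$, which you do not address and which is essentially as hard as the lemma itself). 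But $\R\Hom_R(T,-)$ preserves coproducts if and only if $T$ is a compact object of $\D(R)$, i.e.\ quasi-isomorphic to a bounded complex of finitely generated projective \emph{right} $R$-modules; an infinitely generated $1$-tilting module is never compact. Your justification---that $_ST$ is finitely (indeed cyclically) generated by Lemma~\ref{L:S-sequence}---is a non sequitur: the left $S$-module structure of $T$ is irrelevant to whether $\Hom_R(T_R,-)$ commutes with direct sums of right $R$-modules. (Likewise, ``$\R H$ has a right adjoint'' is, by Brown representability, equivalent to the very coproduct-preservation in question, so it cannot be used to establish it.) The computation of $\eta_R$ itself, via the presentation of $\R\Hom_R(T,R)$ as a two-term complex of projective $S$-modules, is sound in outline, but it only proves invertibility on the thick subcategory generated by $R$ (the perfect complexes), not on all of $\D(R)$.

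The paper avoids this entirely by arguing on an arbitrary complex $M^{\cdot}$: setting $C^{\cdot}=\Hom_R(T,\underline{{\mathbf i}}M^{\cdot})$, it computes $C^{\cdot}\overset{\LL}\otimes_ST$ using the two-term projective resolution $0\to I\to S\to {}_ST\to 0$ of Lemma~\ref{L:S-sequence}, uses Proposition~\ref{P:Hom-Tor}~(2) termwise (namely $\Tor_1^S(\Hom_R(T,M),T)=0$ for \emph{every} $M$) to replace the derived tensor product by the plain one, and then uses Proposition~\ref{P:Hom-Tor}~(1) termwise (each $I^n$ is injective, hence in $T^\perp$) to identify $C^{\cdot}\otimes_ST$ with $\underline{{\mathbf i}}M^{\cdot}$. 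If you want to keep a generator-based argument you must either restrict to where it is valid or supply a genuinely new proof that the relevant comparison maps become invertible after applying $\LL G$; as written, the reduction step is false.
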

  \begin{proof} 
Let $M^{\cdot}$ be a complex in $\D(R)$ and consider a $K$-injective resolution $\underline{{\mathbf i}}M^{\cdot}$ of $M^{\cdot}$.
We have:
\[{\rm(1)}\qquad \R H (M^{\cdot})= \R \Hom_R(T, -)(M^{\cdot})=H(\underline{{\mathbf i}}M^{\cdot}).\]

Let $C^{\cdot}=H({\mathbf i}M^{\cdot})$.  $C^{\cdot}$ is a complex of right $S$-modules and 
\[\LL G (C^{\cdot})= \LL(-\otimes_S T)(C^{\cdot})= G(\underline{{\mathbf p}}C^{\cdot})\]
where $\underline{{\mathbf p}}C^{\cdot}$ is a $K$-projective resolution of $C^{\cdot}$ as a complex in $\D(S)$.

Consider the complex $T^{\cdot}:0\to {}_ST\to 0$ concentrated in degree $0$. A $K$-projective resolution $\underline{{\mathbf p}}T^{\cdot}$ of $T^{\cdot}$ in $\D(S)$ is the complex $0\to I\overset{\delta}\to S\to 0$ (from the exact sequence (b) in Lemma~\ref{L:S-sequence}). 

From the quasi-isomorphism $\underline{{\mathbf p}}T^{\cdot}\to T^{\cdot}$ and $\underline{{\mathbf p}}C^{\cdot}\to C^{\cdot}$ we get the chain of quasi-isomorphisms:
\[G( \underline{{\mathbf p}}C^{\cdot})= \underline{{\mathbf p}}C^{\cdot}\otimes_ST\leftarrow \underline{{\mathbf p}}C^{\cdot}\otimes_S\underline{{\mathbf p}}T^{\cdot}\to C^{\cdot}\otimes_S \underline{{\mathbf p}}T^{\cdot}.\]

Thus, $\LL G (C^{\cdot})=C^{\cdot}\otimes_S \underline{{\mathbf p}}T^{\cdot}$ and this gives
\[\LL G (C^{\cdot})=C^{\cdot}\overset{\LL}\otimes_S T 
= \Cone(1\otimes\delta).\]

From the exact sequence (b) in Lemma~\ref{L:S-sequence} we obtain the exact sequence of complexes of right $R$-modules:
\[\Tor^S_1(C^{\cdot}, T)\to C^{\cdot}\otimes_SI\to C^{\cdot}\otimes_SS\to C^{\cdot}\otimes_ST\to 0,\]

Now recalling that $C^{\cdot}$ is the complex $\R H (M^{\cdot})=H(\underline{{\mathbf i}}M^{\cdot})$,  Proposition~\ref{P:Hom-Tor}~(2) yields that the complex $\Tor^S_1(C^{\cdot}, T)$ has zero terms, hence we have the short exact sequence of complexes of right $R$-modules:
\[{\rm (2)}\quad 0\to C^{\cdot}\otimes_SI\to C^{\cdot}\otimes_SS\to C^{\cdot}\otimes_ST\to 0,\]

From (2) we obtain the long exact sequence in cohomology:
\[{\rm(*)}\dots\to H^{n+1}(C^{\cdot}\otimes_ST)\to H^n(C^{\cdot}\otimes_SI)\to\]
\[\to H^n(C^{\cdot}\otimes_SS)\to H^n(C^{\cdot}\otimes_ST)\to \dots\]

%Now from the triangle 
% \[C^{\cdot}\otimes I\overset{1\otimes\delta}\to C^{\cdot}\otimes S\to  \Cone (1\otimes\delta)\to (C^{\cdot}\otimes I)[1]\]
we also have the exact sequence of complexes of right $R$-modules:

\[0\to C^{\cdot}\otimes S\to \Cone (1\otimes\delta)\to (C^{\cdot}\otimes I)[1]\to 0\] 

from which we get the long exact sequence 

\[[{\rm(**)}\dots\to H^{n+1}(\Cone (1\otimes\delta))\to H^n(C^{\cdot}\otimes_SI)\to
H^n(C^{\cdot}\otimes_SS)\to\]
\[\to H^n(\Cone (1\otimes\delta))\to C^{\cdot}\otimes_S I\to \dots\]

Now comparing (*) with (**) we conclude that, for every $n\in \N$
\[H^n(C^{\cdot}\otimes_ST)\cong  H^n(\Cone (1\otimes\delta))\cong  H^n(C^{\cdot}\overset{\LL}\otimes_ST))\]

Hence,
\begin{center} $\LL G(C^{\cdot})=C^{\cdot}\overset{\LL}\otimes_ST$
is quasi isomorphic to $C^{\cdot}\otimes_ST.$
\end{center}

Letting $I^{\cdot}= \underline{{\mathbf i}}M^{\cdot}$,  we have $C^{\cdot}=\Hom_R(T, I^{\cdot})$ and we have also the commutative diagram:

\[ \begin{CD}
\dots@>>>\Hom_R(T, I^n)\otimes_ST@>>>\Hom_R(T, I^{n+1})\otimes_ST@>>>\dots\\
@. @V{\nu}VV @VV{\nu}V@. \\
\dots@>>>I^n@>>>I^{n+1}@>>>\dots
 \end{CD}\]

where the vertical maps are canonical isomorphisms by Proposition~\ref{P:Hom-Tor}~(1), since  $I^{\cdot}$ is a complex of injective right $R$-modules, hence belonging to the tilting class $T^\perp$.

Hence, $\Hom_R(T, I^n)\otimes_ST$ and $I^{\cdot}$ are  canonically isomorphic as complexes of $R$-modules, so we have:
\[\LL G(\R H(M^{\cdot}))=H( \underline{{\mathbf i}}M^{\cdot})\overset{\LL}\otimes_S T\cong \underline{{\mathbf i}}M^{\cdot}\cong M^{\cdot}.\]

%Thus we can conclude that, for every complex $M^{\cdot}\in\D(R)$ we have:
% 
% \[ \LL(-\otimes_ST)\circ \R\Hom (T, -) (M^{\cdot})=[\Hom_R(T-)( I^{\cdot})]\overset{\LL}\otimes_S T\cong \Hom_R(T, I^{\cdot})\otimes T=  I^{\cdot}=  M^{\cdot},\]
%where $I^{\cdot}=\underline{{\mathbf i}}M^{\cdot}$ is a $K$-injective resolution of $M^{\cdot}$. 
\end{proof}

\begin{proof} of Theorem \ref{T:derived-equivalence} 

Condition (1) is proved by Lemma~\ref{L:counit} and the equivalence of (1) with the other conditions follows essentially by applying  \cite[Proposition 1.3]{GZ}.

To complete the proof we add only a few comments.

The equivalence $\Theta \colon \D(S)[\Sigma^{-1}]\to \D(R)$, guaranteed by \cite[Proposition 1.3]{GZ},  is a triangle equivalence, since $\LL G$ is a triangle functor and $q$ is the canonical localization functor, so that the triangles in $\D(S)[\Sigma^{-1}] $ are images of triangles in $\D(S)$.

The functor $\LL G=-\overset{\LL}\otimes_S T$ is a triangle functor, hence $\Ker (\LL G)$ is a full triangulated subcategory of $\D(S)$. 
It is well known that the quotient category $\D(S)/\Ker (\LL G)$ is the localization of $\D(S)$ at the multiplicative system $\Sigma$ given by the morphisms $u\in \D(S)$ such that there exists a trinagle:
\[K^{\cdot}\to M^{\cdot}\overset u\to N^{\cdot}\to K^{\cdot}[1]\]
where $K^{\cdot}\in \Ker (\LL G)$ and $ M^{\cdot}, N^{\cdot}\in \D(S)$.
Thus $\Sigma$ coincides with the systems of morphisms $u\in \D(S)$ such that $\LL G (u)$ is invertible in $\D(R)$.

\end{proof}
%%%%%%%%%%%%%%%%%%%%%%%%%%%%%%%%%%%%%%%%%%%%%%%%%%%%%%%%%%%%%%%%%%%%%%%%%%%%%%%%%%%%%%%%%%%%%%


\begin{thebibliography}{99}

\bibitem{ATT}
{\sc L. Angeleri H\"ugel, A. Tonolo, J. Trlifaj}, \textit{Tilting
preenvelopes and cotilting precovers}, Algebr. Represent.  Theory 4 (2001), 155--170.


\bibitem{B}  {\sc S. Bazzoni}, \textit{Cotilting modules are
pure-injective}, Proc. Amer. Math. Soc. (131) (2003),
3665-3672.

%
%\bibitem{B2}  S. Bazzoni, \textit{A characterization of $n$-cotilting and $n$-tilting modules}, J. Alg. \textbf{273} (2004), 359-372.

%\bibitem{BET}  S. Bazzoni, P. Eklof, J. Trlifaj, \textit{Tilting cotorsion pairs,} to appear in Bull. London Math.
%Soc.
\bibitem{BH}  {\sc S. Bazzoni, D. Herbera}, \textit{One dimensional tilting modules are of finite type}, Algebr. Represent. Theory 11 (2008), no.1, 43-61.
%\bibitem{BS} S. Bazzoni, J. \v S\v tov\'\i\v cek, \textit{All tilting modules are of finite type}, to appear in Proc. Am.
%Math. Soc.

\bibitem{BN}
{\sc M. B\"okstedt and A. Neeman}, \textit{Homotopy limits in triangulated categories}, Compositio Math. 86 (1993), 209Ð234.

\bibitem{BB}
{\sc S. Brenner, M. Butler}, \textit{Generalizations of the
Bernstein-Gelfand-Ponomarev reflection functors}, in Proc.  ICRA III
LNM 832, Springer (1980), 103--169.

\bibitem{CF}
{\sc R. R. Colby, K. R. Fuller}, \textit{Tilting, cotilting and serially
tilted rings}, Comm. Algebra  18(5) (1990), 1585-1615.

\bibitem{C}
{\sc R. Colpi}, \textit{Tilting in Grothendieck categories} Forum Math. 11(1999), 735-759.

\bibitem{CT} {\sc R. Colpi and J. Trlifaj}, \textit{ Tilting modules and tilting torsion theories},
J. Alg. \textbf{178} (1995), 614--634.

\bibitem{CDT}  {\sc R. Colpi, G. D'Este, A. Tonolo}, \textit{Quasi-tilting modules and
counter equivalences}, J. Alg. 191(1997), 461-494.

\bibitem{CTT}
{\sc R. Colpi, A. Tonolo, J. Trlifaj}, \textit{Partial cotilting modules and
the lattices induced by them}, Comm.  Algebra 25(10) (1997),
3225--3237.

 \bibitem{F1} {\sc A. Facchini}, \textit{Divisible modules over integral domains},  Ark. Mat.  26  (1988),  no. 1, 67--85.

%
\bibitem{F2} {\sc A. Facchini}, \textit{A tilting module over commutative integral domains}, Comm. Alg. {15}(11) (1987), 2235--2250.

\bibitem{F} {\sc L. Fuchs}, \textit{On divisible modules over domains}, Abelian Gropus and modules, Proc. of the Udine Conference, CISM Courses and Lectures 287, Springer-Verlag, Wien-New York, 1984, 341-356.


\bibitem{GZ} {\sc P. Gabriel, M. Zisman}, \textbf{Calculus of fractions and homotopy category},Ergenbnisse  {35}, Springer-Verlag 1967

\bibitem{H} {\sc D. Happel}, \textit{On the derived category of a finite-dimensional algebra}, Comment. Math.
Helv. 62 (1987), no. 3, 339Ð389.

\bibitem{HR}
D. Happel, C. Ringel, \textit{Tilted algebras}, Trans.  Amer.  Math.
Soc.  \textbf{215} (1976), 81--98.

\bibitem{HRS} {\sc D. Happel, I.Reiten, S. Smal\o}, \textbf{Tilting in abelian categories and quasitilted algebras}, Memoirs  Amer.  Math.
Soc.  \textbf{575} (1995).


\bibitem{Ke2} {\sc B. Keller}, \textit{Derived categories and their uses}, Chapter of the Handbook of algebra, Vol. 1, edited by M. Hazewinkel, Elsevier 1996.

\bibitem{Ke} {\sc B. Keller}, \textit{Derived categories and tilting}, Handbook of tilting Theory, Lecture Note Series, LMS \textbf{332} 2007, 49-97

\bibitem{S} {\sc N. Spaltenstein}, \textit{Resolution of unbounded complexes},
Compositio Mathematica \textbf{65} (1988), 121--154.
%
\bibitem{Bo} {\sc B. Stenstr\"om},\textbf{Rings of quotients}, Grundleheren der Math., 217, Springer-Verlag, (1975).

%\bibitem{St}  J. \v S\v tov\'\i\v cek, \textit{All cotilting modules are pure injective},  Proc. Am.
%Math. Soc.

%\bibitem{C}
%R. Colpi, \textit{Tilting modules and *-modules}, Comm. Algebra 
%21 (1993), 1095--1102.



\end{thebibliography}
\end{document}